\title{A Compositional Approach to Certifying the Almost \\ Global Asymptotic Stability of Cascade Systems}
\author{Jake Welde, Matthew D. Kvalheim, and Vijay Kumar%
\thanks{
	J. Welde and V. Kumar are with the GRASP Laboratory at the University of Pennsylvania, while M. D. Kvalheim is with the Department of Mathematics at the University of Michigan. emails: \texttt{\{jwelde,kumar\}@seas.upenn.edu}, \texttt{kvalheim@umich.edu}.  
	We gratefully acknowledge the support of Qualcomm Research, NSF Grant CCR-2112665, and the NSF Graduate Research Fellowship Program.
}
}
\date{\today}
\declaretheoremstyle[notefont=\normalfont\itshape,bodyfont=\normalfont]{normaltext}
\newtheorem{theorem}{Theorem}
\newtheorem{fact}{Fact}
\newtheorem{corollary}{Corollary}
\declaretheorem[name=Definition,style=normaltext]{definition}
\declaretheorem[name=Remark,style=normaltext]{remark}
\newcommand{\R}{\mathbb{R}}
\declaretheoremstyle[notefont=\normalfont\itshape,bodyfont=\normalfont\itshape,headfont=\normalfont\sc]{smallcapsname}
\declaretheorem[name=Step,style=smallcapsname]{step}
\newtheorem{proposition}{Proposition}
\DeclareMathOperator{\dist}{dist}
\renewcommand\qedsymbol{$\blacksquare$}
\begin{document}

\makeatletter
\@addtoreset{step}{theorem}
\makeatother

\maketitle

\begin{abstract}
In this work, we give sufficient conditions for the almost global asymptotic stability of a cascade in which the subsystems are only almost globally asymptotically stable. The result is extended to upper triangular systems of arbitrary size. In particular, if the unforced subsystems are almost globally asymptotically stable and their only chain recurrent points are hyperbolic equilibria, then the 
boundedness of forward trajectories is sufficient for the almost global asymptotic stability of the full upper triangular system.
We show that unboundedness of such cascades is prohibited by growth rate conditions on the interconnection term and a Lyapunov function for the unforced outer subsystem, and the required structure for the chain recurrent set is enjoyed by classes of systems common in geometric control e.g. dissipative mechanical systems.
Our results stand in contrast to prior works that require either time scale separation, prohibitively strong disturbance robustness properties, or \textit{global} asymptotic stability in the subsystems. 
\end{abstract}

\vspace{-2pt}
\section{Introduction}
\vspace{-2pt}

In this work, we are interested in 
cascades of the form
\begin{subequations}
	\begin{align}
		\dot{x} &= f(x,y), \label{outer_general} 
		\\ 
		\dot{y} &= g(y), \label{inner_general}
	\end{align}
\end{subequations}
depicted graphically in in Fig. 1 as  system $\Sigma$. We assume that $x$ and $y$ evolve on $X$ and $Y$, which are smooth, connected, complete Riemannian manifolds without boundary (see Remark \ref{choice_of_setting} for an explanation of the choice of setting).

Cascades appear in many interesting and important physical systems. Many underactuated mechanical systems can be rendered as a cascade after a feedback transformation \cite{Olfati2000}, and cascades arise often in robotic systems, either intrinsically 
or after control design \cite{Lee2010}. A long research tradition has studied the implications of cascade structure to simplify analysis and aid design
\cite{Kokotovic2001}%
. This compositional approach is motivated by the observation that control design for a subsystem is typically easier, due to e.g. lower dimensionality, lower relative degree, or full actuation. 
However, one must ensure that the recombined system achieves the desired behavior, since only \textit{local}  asymptotic stability is preserved under cascades for general nonlinear systems \cite{Sepulchre1997}.
	\vspace{-14pt}
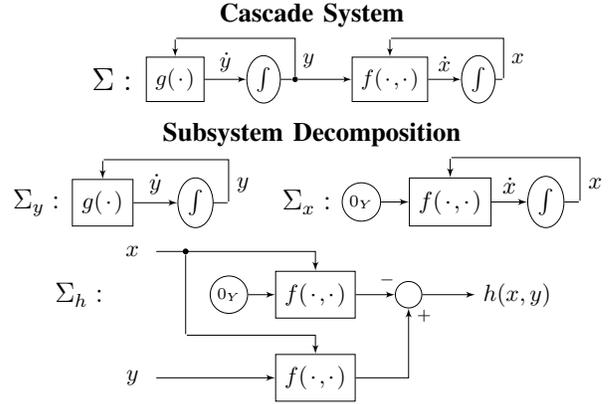
\begin{figure}
	\tikzstyle{block} = [draw, rectangle, 
	minimum height=3em, minimum width=6em]
	\tikzstyle{sum} = [draw, circle, node distance=1cm]
	\tikzstyle{input} = [coordinate]
	\tikzstyle{output} = [coordinate]
	\tikzstyle{pinstyle} = [pin edge={to-,thin,black}]
	
	\centering
	
	\textbf{Cascade System} 
	
	\vspace{4pt}
	
	\resizebox{.7\columnwidth}{!}{%
	\begin{tikzpicture}[auto, node distance=2cm,>=latex']
		
		\node [block, minimum width={26pt},minimum height={20pt}] (inner) {$g(\hspace{1pt}\cdot\hspace{1pt})$};
		\node [draw, ellipse, right of=inner, node distance=1.4cm, inner sep=2pt] (inner_integral) {$\int$};
		\node [above of=inner, node distance=.65cm, inner sep=0pt, outer sep=0pt] (inner_top) {};
		\node [right of=inner_integral, node distance=.5cm, ellipse, fill=black, minimum height=2.5pt, minimum width=2.5pt, inner sep=0pt] (inner_end) {};
		
		\node [left of=inner, node distance=1cm] (system1) {\Large $\Sigma:$};
		
		\node [block, right of=inner_integral, minimum width={34pt},minimum height={20pt}] (outer) {$f(\hspace{1pt}\cdot\hspace{1pt},\hspace{-.5pt}\cdot\hspace{1pt})$};
		\node [draw, ellipse, right of=outer, node distance=1.4cm, inner sep=2pt] (outer_integral) {$\int$};
		\node [above of=outer, node distance=.65cm, inner sep=0pt, outer sep=0pt] (outer_top) {};
		\node [right of=outer_integral, node distance=.4cm, outer sep=0pt, inner sep=0pt] (outer_end) {};

		\draw [->] (inner_integral) -- (inner_end) |-  (inner_top) node[right, pos=0.2] {$y$} -- (inner);	
		\draw [->] (inner_end) -- (outer);	
		\draw [->] (inner) -- node[name=u] {$\dot{y}$} (inner_integral);
		
		\draw [->] (outer_integral) -- (outer_end) |-  (outer_top) node[right, pos=0.25] {$x$} -- (outer);	
		\draw [->] (outer) -- node[name=u] {$\dot{x}$} (outer_integral);
		
	\end{tikzpicture}
}

	\vspace{2pt}	
	\textbf{Subsystem Decomposition} 

	\vspace{4pt}	
	\resizebox{.4\columnwidth}{!}{%
	\begin{tikzpicture}[auto, node distance=2cm,>=latex']
		
		\node [block, minimum width={26pt},minimum height={20pt}] (inner) {$g(\hspace{1pt}\cdot\hspace{1pt})$};
		\node [draw, ellipse, right of=inner, node distance=1.4cm, inner sep=2pt] (inner_integral) {$\int$};
		\node [above of=inner, node distance=.65cm, inner sep=0pt, outer sep=0pt] (inner_top) {};
		\node [right of=inner_integral, node distance=.5cm, outer sep=0pt, inner sep=0pt] (inner_end) {};
		
		\node [left of=inner, node distance=1cm] (system1) {\large $\Sigma_y:$};
		
		\draw [->] (inner_integral) -- (inner_end) |-  (inner_top) node[right, pos=0.2] {$y$} -- (inner);	
		\draw [->] (inner) -- node[name=u] {$\dot{y}$} (inner_integral);
		
	\end{tikzpicture}
}
	\resizebox{.53\columnwidth}{!}{%
	\begin{tikzpicture}[auto, node distance=2cm,>=latex']
		
		\node [block, minimum width={34pt},minimum height={20pt}] (inner) {$f(\hspace{1pt}\cdot\hspace{1pt},\hspace{-.5pt}\cdot\hspace{1pt})$};
		\node [draw, ellipse, left of=inner, node distance=1.3cm, inner sep=0pt, minimum height=16pt, minimum width=16pt] (eql) {\scriptsize $0_Y$};		
		\node [draw, ellipse, right of=inner, node distance=1.4cm, inner sep=2pt] (inner_integral) {$\int$};
		\node [above of=inner, node distance=.65cm, inner sep=0pt, outer sep=0pt] (inner_top) {};
		\node [right of=inner_integral, node distance=.5cm, outer sep=0pt, inner sep=0pt] (inner_end) {};
		
		\node [left of=eql, node distance=.8cm] (system1) {\large $\Sigma_x:$};
		
		\draw [->] (inner_integral) -- (inner_end) |-  (inner_top) node[right, pos=0.2] {$x$} -- (inner);	
		\draw [->] (inner) -- node[name=u] {$\dot{x}$} (inner_integral);
		\draw [->] (eql) -- (inner);	
	\end{tikzpicture}
}

	\resizebox{.8\columnwidth}{!}{%
	\begin{tikzpicture}[auto, node distance=2cm,>=latex']
		
		\node [block, minimum width={34pt},minimum height={20pt}] (inner) {$f(\hspace{1pt}\cdot\hspace{1pt},\hspace{-.5pt}\cdot\hspace{1pt})$};
		
		\node [block, minimum width={34pt},minimum height={20pt}, below of=inner, node distance=1.25cm] (zero) {$f(\hspace{1pt}\cdot\hspace{1pt},\hspace{-.5pt}\cdot\hspace{1pt})$};
		\node [left of=zero, node distance=2.75cm, inner sep=1pt, minimum height=20pt, minimum width=20pt] (input) {$y$};		
		
		\node [draw, ellipse, left of=inner, node distance=1.3cm, inner sep=0pt, minimum height=16pt, minimum width=16pt] (eql) {\scriptsize $0_Y$};		
		\node [sum, right of=inner, node distance=1.4cm, inner sep=4pt] (inner_integral) {};
		\node [above of=input, node distance=1.9cm,  inner sep=1pt, minimum height=20pt, minimum width=20pt] (inner_top) {$x$};
		\node [right of=inner_top, node distance=.8cm, ellipse, fill=black, minimum height=2.5pt, minimum width=2.5pt, inner sep=0pt] (inner_button) {};
		\node [below of=inner_button, node distance=1.26cm, inner sep=0pt, outer sep=0pt] (input_corner) {};
		
		\node [right of=inner_integral, node distance=.5cm, outer sep=0pt, inner sep=0pt] (inner_end) {};
		
		\node [left of=eql, node distance=2.25cm] (system1) {\large $\Sigma_h:$};
		
		\draw [->] (inner_top) -| (inner);	
		\draw [->] (inner) -- node[name=u, pos=.8] {\scriptsize $-$} (inner_integral);
		\draw [->] (inner_integral) -- node[pos=1, right] {$h(x,y)$} +(1,0);
		\draw [->] (eql) -- (inner);	
		\draw [->] (input)  -- (zero);	
		\draw [->] (zero) -| node[name=u, pos=.95, right] {\scriptsize $+$}  (inner_integral);	
		\draw [->] (inner_button) -- (input_corner) -| (zero);	
	\end{tikzpicture}
}	
\vspace{-6pt}	
	\caption{We 
		give sufficient conditions for the almost global asymptotic stability of 
		a cascade system $\Sigma$
		in terms of qualitative properties of the ``inner subsystem'' $\Sigma_y$  and the ``unforced outer subsystem'' $\Sigma_x$, as well as growth rate criteria on the ``interconnection term''
		 $\Sigma_h$ and a Lyapunov function for $\Sigma_x$. In the diagram above, $0_Y$ is the stable equilibrium of $\Sigma_y$.
}
	\label{block_diagrams}
	\vspace{-20pt}
\end{figure}

\subsection{Prior Work on Cascade Stability}

Approaches to stability certification for nonlinear cascades 
have exploited a range of structural features. Singular perturbation techniques 
\cite{Sreenath2013} 
assume a time scale separation between the ``fast'' inner subsystem and the ``slow'' outer subsystem, and show that a system's behavior tends toward that of a ``reduced'' system as the ratio between convergence rates tends to zero.
However, this approach necessitates rapid inner subsystem convergence, which can be challenging to achieve for a control system with realistic input limitations.
Another popular approach relies on the robustness of the outer subsystem to disturbances, leveraging the property of \textit{input to state stability}, which roughly requires the asymptotic response of the system under a disturbance input to be bounded 
by the size of the input (and therefore also implies global asymptotic stability of the system in the absence of disturbances). A classic result then establishes the global asymptotic stability of a cascade for which the outer subsystem is input to state stable and the inner subsystem is globally asymptotically stable \cite{Sontag1989}.
Methods avoiding robustness or time scale assumptions have relied on local exponential stability of the inner subsystem as well as growth restrictions on the ``interconnection term'' and a suitable Lyapunov function for the unforced outer subsystem to certify asymptotic stability of cascades with globally asymptotically stable subsystems \cite{Sepulchre1997}.

These global results have important applications, but their utility in geometric control is rather limited. This is because any manifold 
admitting a smooth globally asymptotically stable vector field is 
diffeomorphic to $\mathbb{R}^n$ \cite{Wilson1967}, while the state space of many physical systems (e.g. free-flying robots) is not \cite{Bhat2000}.
The most one can achieve in the smooth non-Euclidean setting, for either the subsystems or the cascade, is \textit{almost} global asymptotic stability. 
This has motivated an almost global notion of input to state stability \cite{Angeli2004}, in which an asymptotic gain holds for all but a measure zero set of initial conditions; a cascade is then guaranteed to be almost globally asymptotically stable if its outer subsystem is almost globally input to state stable and its inner subsystem is almost globally asymptotically stable. While verifying almost global input to state stability can be challenging, this can be done under conditions on the exponential instability of other equilibria as well as the ``ultimate boundedness'' of trajectories of the system even when subjected to arbitrary disturbances \cite{Angeli2010}. 

However, not all almost globally asymptotically stable cascades have an outer subsystem enjoying this property; indeed, almost global input to state stability seems to be an inherently stricter property than necessary, since it characterizes the response of the system to arbitrary disturbances, while for our purposes, the outer subsystem is almost always subjected to a converging disturbance. Yet, the lack of a comprehensive understanding of such systems has required bespoke stability certificates for almost globally asymptotically stable cascaded controllers in practice, inhibiting generalization; for example, a Lyapunov function for the combined system may be handcrafted via human intuition, even though the cascaded structure inspired the control design \cite{Lee2010}.

\subsection{Motivating Example}

\begin{figure}[t]
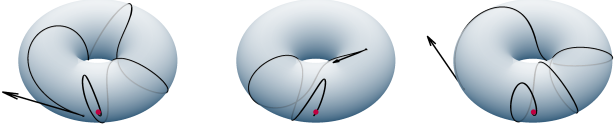

	\centering
	
	\begin{tikzpicture}[outer sep=0, inner sep=0]
		\node (image) at (0,0) { \includegraphics[width=.33\columnwidth,trim={15cm 6cm 13.5cm 4.5cm},clip] {trajectories/%
trajectory_6.0626_3.4239_6.0984_-4.5717.png%
		}};
		\node  at (0.037,-.75) [ellipse, fill=purple, minimum height=2pt, minimum width=2pt, inner sep=0pt] (inner_end) {};
	\end{tikzpicture}%
	\hfill
	\begin{tikzpicture}[outer sep=0, inner sep=0]
		\node (image) at (0,0) { \includegraphics[width=.33\columnwidth,trim={15cm 6cm 13.5cm 4.5cm},clip] {trajectories/%
trajectory_1.4482_3.4431_1.2237_-2.7408.png%
		}};
		\node  at (0.037,-.75) [ellipse, fill=purple, minimum height=2pt, minimum width=2pt, inner sep=0pt] (inner_end) {};
	\end{tikzpicture}%
	\hfill
\begin{tikzpicture}[outer sep=0, inner sep=0]
	\node (image) at (0,0) { \includegraphics[width=.33\columnwidth,trim={15cm 6cm 13.5cm 4.5cm},clip] {trajectories/%
			trajectory_5.5617_4.1329_5.0026_-4.0129.png%
	}};
	\node  at (0.037,-.75) [ellipse, fill=purple, minimum height=2pt, minimum width=2pt, inner sep=0pt] (inner_end) {};
\end{tikzpicture}%
		\vspace{-6pt}	
	\caption{A sampling of initial conditions and resulting trajectories of the motivating example system \eqref{outer_example}-\eqref{inner_example}, projected down to $\mathbb{T}^2$ from the full state space ${T\mathbb{T}^2 = T\mathbb{S}^1 \times T\mathbb{S}^1}$, where the ``small'' axis of the torus corresponds to $\theta$ and the ``large'' axis corresponds to $\phi$. All sampled trajectories converge to ${(0,0,0,0) \in T\mathbb{T}^2}$, marked in red. Despite the highly non-local and topologically complex behavior of the trajectories, our results certify the almost global asymptotic stability of the system, without the need to guess an explicit Lyapunov function for the full cascade.
 	}	
	\label{trajectories}
	\vspace{-16pt}
\end{figure}

To motivate our investigation, we present a simple representative example. Consider a cascade of the form \eqref{outer_general}-\eqref{inner_general} evolving on the tangent bundle of $\mathbb{T}^2$, given by
\begin{subequations}
	\begin{align}
	\ddot{\theta} &= -(\sin \theta + \dot{\theta})\cos 2 \phi, \label{outer_example}
	\\
	\ddot{\phi} &= -(\sin \phi + \dot{\phi}), \label{inner_example}
\end{align}
\end{subequations}
where ${x = (\theta,\dot{\theta}) \in T\mathbb{S}^1}$,
${y = (\phi,\dot{\phi}) \in T\mathbb{S}^1}$, and we make the identification ${T\mathbb{S}^1 \cong \mathbb{S}^1 \times \mathbb{R}}$ for notational convenience.
A sampling of system trajectories is shown in Fig. \ref{trajectories}.
In fact, \eqref{inner_example} describes a damped pendulum with total energy given by
${W : (\phi,\dot{\phi}) \mapsto 1 - \cos \phi + \tfrac{1}{2}\dot{\phi}^2}$, and using $W$ as a LaSalle function
it can be shown that  that ${(\phi,\dot{\phi})=(0,0)}$ is almost globally asymptotically stable for the inner subsystem \eqref{inner_example}. By the same reasoning, ${(\theta,\dot{\theta})=(0,0)}$ is also almost globally asymptotically stable for the restriction of the outer subsystem \eqref{outer_example} to the stable equilibrium of the inner subsystem.

It turns out that the entire cascade \eqref{outer_example}-\eqref{inner_example} is almost globally asymptotically stable, but the system does not satisfy the hypotheses of any of the previously discussed results. In particular, the subsystems are not globally asymptotically stable, nor is there a time scale separation between the loops. Furthermore, viewing $(\phi,\dot{\phi})$ as a disturbance to \eqref{outer_example}, it can be seen that the outer subsystem is \textit{not} almost globally input to state stable \cite[Def. 2.1]{Angeli2004}, since the response to the bounded disturbance ${(\phi,\dot{\phi})=(\pi/2,0)}$ grows unbounded from almost all initial conditions.
 Nonetheless, the results of this paper will guarantee the almost global asymptotic stability of a class of systems that includes \eqref{outer_example}-\eqref{inner_example}.

In what follows, we consider cascade systems of the form $\Sigma$ in Fig. 1, and we use properties of the decomposed subsystems shown to certify its almost global asymptotic stability. 
In Sec. \ref{sec:main_results}, we present the main results, which show that when $\Sigma_x$ and $\Sigma_y$ are almost globally asymptotically stable and locally exponentially stable and all chain recurrent points of $\Sigma_x$ are hyperbolic equilibria, then the cascade is almost globally asymptotically stable and locally exponentially stable if trajectories are bounded in forward time.
We extend this result inductively to upper triangular systems with arbitrarily many subsystems.
In Sec. \ref{sec:hypotheses}, we examine the hypotheses of the main results in greater detail,
discussing some important classes of systems enjoying the stated chain recurrence property and also showing that forward boundedness 
can be verified via growth rate criteria on $\Sigma_h$ and a Lyapunov function for $\Sigma_x$. In Sec. \ref{sec:application}, we apply our results to the motivating example, before discussing the results and concluding the paper in Secs. \ref{sec:discussion} and \ref{sec:conclusion}.

\section{Almost Global Asymptotic Stability}

\label{sec:main_results}

We begin with a brief review of relevant concepts from dynamical systems theory, adopting
the definitions of \cite{Mischaikow1995}, whose perspective on the behavior of asymptotically autonomous semiflows is a central ingredient of our approach.

\begin{definition}
	A \textit{nonautonomous semiflow} on a smooth Riemannian manifold $(M,\mu)$ is a continuous map
	\begin{equation}
		{\Phi : \{(t,s) : 0 \leq s \leq t < \infty \} \times M \rightarrow M}
	\end{equation}
	such that ${\Phi(s,s,x) = x}$ and ${\Phi\big(t,s,\Phi(s,r,x)\big) = \Phi(t,r,x)}$ for all ${ t \geq s \geq r > 0}$. A semiflow is called \textit{autonomous} when additionally,
	${\Phi(t+r,s+r,x) = \Phi(t,s,x)}$ for all $r > 0$.
\end{definition}
In the previous, the parameters $s$ and $t$ can be thought of as respective ``start'' and ``end'' times. Hereafter, we will use the shorthands
${\Phi^{(t,s)} : 
x \mapsto \Phi(t,s,x)}$
for nonautonomous semiflows and 
${	\Phi^{t} :
 x \mapsto \Phi(t,0,x)}$
for autonomous semiflows. Note that nonautonomous semiflows can be generated by ``time-varying'' vector fields, while autonomous semiflows can be generated by ``time-invariant'' vector fields.

\begin{definition}
	A nonautonomous semiflow $\Phi$ is \textit{asymptotically autonomous with limit semiflow $\Theta$} if, for any sequences 
	${t_j \rightarrow t}$,  ${s_j \rightarrow \infty}$, and ${x_j \rightarrow x}$, it holds that 
	\begin{align}		
		\Phi^{(t_j + s_j,s_j)} (x_j)  \to \Theta^t(x) \textrm{ as } j \rightarrow \infty.
	\end{align}
\end{definition}

\begin{definition}
	The \textit{equilibrium set} of an autonomous semiflow $\Phi$ is given by ${\mathcal{E}(\Phi) = \left\{x \in M : \Phi^t(x) = x \ \forall \ t \geq 0\right\}}$.
	An equilibrium  
		${0_M \in \mathcal{E}(\Phi)}$ 	
	is \textit{hyperbolic} if its linearization has no purely imaginary eigenvalues 
	\cite[p. 149]{Robinson1998}, and it
is \textit{almost globally asymptotically stable} if it is (locally) asymptotically stable and its \textit{basin of attraction}, denoted ${\mathcal{B}({0_M}) \subseteq M}$, is full measure and residual, i.e. the complement of $\mathcal{B}({0_M})$ is measure zero \cite[p.128]{SmoothLee} and meager 
	 (a countable union of nowhere dense sets) in $M$.
\end{definition}

\begin{definition}
	For an autonomous semiflow $\Phi$ on $(M,\mu)$ and constants ${\varepsilon, T > 0}$,
	an \textit{$(\varepsilon,T)$-chain} is a pair of finite sequences $(x_0, x_1,\ldots,x_n)$  and $(t_1,t_2,\ldots,t_n)$ satisfying
	\begin{equation}
		{\dist\big(\Phi^{t_i}(x_{i-1}),x_i\big) < \varepsilon}\textrm{ and }{t_i > T}, \ i = 1, 2, \ldots, n,
	\end{equation}
	where ${\dist : M \times M \rightarrow \mathbb{R}}$ is the distance induced by $\mu$. 
	The \textit{chain recurrent set} $\mathcal{R}(\Phi)$ comprises all points $x$ for which an $(\varepsilon,T)$-chain with ${x = x_0 = x_n}$ exists for every ${\varepsilon, T > 0}$.
\end{definition}

The chain recurrent set plays a central role in the topological structure of a dynamical system \cite{Robinson1998}. Our present interest in chain recurrence will revolve around the following fact.

\begin{fact}[Mischaikow, Smith, and Thieme \cite{Mischaikow1995}]
		\label{misch}
	Precompact forward trajectories of an asymptotically autonomous semiflow converge to the chain recurrent set of the limit semiflow.  
\end{fact}

\begin{remark}
	\label{choice_of_setting}
	We define chain recurrence using $(\varepsilon,T)$-chains with respect to a distance function and some $\varepsilon > 0$
	because we rely on the results of \cite{Mischaikow1995}, in which the same choice is made. A Riemannian distance function is appropriate, since then an asymptotically stable equilibrium is locally exponentially stable if and only if it is hyperbolic. In a complete Riemannian manifold (our chosen setting), a set is precompact (i.e. has compact closure) if and only if it is bounded, a more familiar notion in the control community, which will also play a convenient role in our second theorem.
\end{remark}

\subsection{Main Results}

\begin{theorem}[Almost Global Asymptotic Stability of a \mbox{Cascade}]
	\label{almost_global_stability}
	Consider a cascade on $X \times Y$ given by
	\begin{subequations}
		\begin{align}
			\dot{x} &= f(x,y), \label{outer_theorem} \\
			\dot{y} &= g(y), \label{inner_theorem} 
		\end{align}
	\end{subequations}
where 
\eqref{inner_theorem} and the unforced outer subsystem
\begin{equation}
	{\dot{x} = f(x,0_Y)}
	\label{unforced}
\end{equation}
are almost globally asymptotically stable with respect to ${0_Y \in Y}$ and ${0_X \in X}$ respectively, and moreover $0_Y$ and all chain recurrent points of \eqref{unforced} are hyperbolic equilibria.
Then, \eqref{outer_theorem}-\eqref{inner_theorem} is almost globally asymptotically stable and locally exponentially stable with respect to $(0_X,0_Y)$ as long as any forward trajectory starting in $X \times \mathcal{B}(0_Y)$ is bounded.
\end{theorem}

\begin{proof}

	Since ${X \times \mathcal{B}(0_Y)}$ is invariant for \eqref{outer_theorem}-\eqref{inner_theorem} and all forward trajectories beginning in ${X \times \mathcal{B}(0_Y)}$ are bounded, the cascade induces an autonomous semiflow
	\begin{equation}
		\Psi^t : X \times \mathcal{B}(0_Y) \rightarrow X \times \mathcal{B}(0_Y).
	\end{equation}
	Similarly, \eqref{unforced} and \eqref{inner_theorem} induce the autonomous semiflows 
	\begin{subequations}
			\begin{align}
			\Theta^t : X \rightarrow X, \ x_0 & \mapsto \mathrm{pr}_1 \circ \Psi^t(x_0,0_Y), \label{limit_semiflow} \\
			\Upsilon^t : \mathcal{B}(0_Y) \rightarrow \mathcal{B}(0_Y), \ y_0 & \mapsto \mathrm{pr}_2 \circ \Psi^t(0_X,y_0), \label{inner_semiflow}
		\end{align}
	\end{subequations}	
	where $\mathrm{pr}_1$ and $\mathrm{pr}_2$ are the natural projections onto $X$ and $Y$, and
	we have carefully chosen the domains of the semiflows.
	We observe that for each initial condition ${y_0 \in \mathcal{B}(0_Y)}$,
	\eqref{outer_theorem} may be interpreted as time-varying dynamics on $X$ given by 
	\begin{equation}
		\dot{x} = 
		 f\big(x,\Upsilon^t(y_0)\big).
	\end{equation}
	In this manner, each initial condition ${y_0 \in \mathcal{B}(0_Y)}$ induces a corresponding \textit{nonautonomous} semiflow on $X$ given by
	\begin{equation}
		\Phi^{(t,s)}_{y_0} : X \rightarrow X, \ x_0 \mapsto \textrm{pr}_1 \circ \Psi^{t-s} \big(x_0,\Upsilon^s(y_0)\big),
		\label{nonautonomous_semiflow}
	\end{equation}
	such that we may also conclude
	\begin{equation}
		\Psi^t(x_0,y_0) = \big(\Phi^{(t,0)}_{y_0}(x_0), \Upsilon^t(y_0) \big).
		\label{flow_decomposition}
	\end{equation}
With these constructions, we prove the claim in five steps.

		\begin{step}
			${\mathcal{E}(\Psi)
				 = 
				\mathcal{R}(\Theta) \times \{0_Y\}}$,
			and all points in this set are hyperbolic equilibria, of which only ${(0_X,0_Y)}$ is stable.
		\end{step}
		\begin{proof}
		\renewcommand{\qedsymbol}{$\blacktriangledown$}
			All equilibria  ${(x,y) \in {X \times \mathcal{B}(0_Y)}}$ must have ${y = 0_Y}$ by the definition of $\mathcal{B}(0_Y)$ as a basin of attraction, and therefore ${f(x,0_Y) = 0}$ i.e. $x$ is an equilibrium of \eqref{unforced}. The equality then follows from the assumption
			that ${\mathcal{R}(\Theta) \subseteq \mathcal{E}(\Theta)}$, since equilibria are always chain recurrent i.e. ${ \mathcal{E}(\Theta) \subseteq  \mathcal{R}(\Theta)}$.
			Denoting 
			the vector field on ${X \times Y}$ describing the full cascade \eqref{outer_theorem}-\eqref{inner_theorem} by ${F : (x,y) \mapsto \big(f(x,y),g(y)\big)}$, we may express its linearization at any equilibrium ${(x,0_Y) \in X \times \mathcal{B}(0_Y)}$ 
			as
			\begin{equation}
				\left.d F \right|_{(x,0_Y)} = \begin{bmatrix}
					\left. \partial_x f \,  \right|_{(x,0_Y)} &
					\left. \partial_y f \,  \right|_{(x,0_Y)} \\
					0 &
					\left.\partial_y g \, \right|_{\, 0_Y\hphantom{x,()}} \\
				\end{bmatrix}{.}
			\end{equation}
			Since the eigenvalues of a triangular block matrix are simply the eigenvalues of the blocks on the diagonal, the 
			hyperbolicity claim follows directly from
			the	hyperbolicity of $0_Y$ for \eqref{inner_theorem} and the hyperbolicity of all equilibria of \eqref{unforced}.

			An almost globally asymptotically stable system has only one stable equilibrium (since all other equilibria lie on the boundary of its basin of attraction). Therefore,
	at ${x = 0_X}$ all eigenvalues of the top left block have negative real part, but one or more eigenvalues at all other equilibria of \eqref{unforced} have positive real part. Hence $(0_X,0_Y)$ is locally exponentially stable and all other equilibria in $X \times \mathcal{B}(0_Y)$ are unstable.
		\end{proof}
		\vspace{-4pt}
		\noindent To complete the proof, it will therefore suffice to show that the stable equilibrium $(0_X,0_Y)$ is almost globally attractive.

	\begin{step}
	For any ${y_0 \in \mathcal{B}(0_Y)}$, the nonautonomous semiflow $\Phi
	_{y_0}$ is asymptotically autonomous with limit semiflow $\Theta$. 
\end{step}
\begin{proof}
		\renewcommand{\qedsymbol}{$\blacktriangledown$}
For any sequences ${t_j \rightarrow t}$,  ${s_j \rightarrow \infty}$, and ${x_j \rightarrow x}$, 
	\begin{align}
		&\begin{aligned}
					\lim_{j \rightarrow \infty} &
			\Phi^{(t_j + s_j,s_j)}_{y_0} (x_j) \\
			&= \lim_{j \rightarrow \infty} \textrm{pr}_1 \circ \Psi^{t_j} \big(x_j,\Upsilon^{s_j}(y_0)\big)
			\label{defn_nonautonomous}
		\end{aligned}
		\\
		&		\phantom{\lim_{j \rightarrow \infty}}
		= \textrm{pr}_1 \circ \Psi^{\lim\limits_{j \rightarrow \infty}  t_j} \left(\lim\limits_{j \rightarrow \infty} x_j,\lim\limits_{j \rightarrow \infty} \Upsilon^{s_j}(y_0)\right) \label{continuity_limit}\\
		&		\phantom{\lim_{j \rightarrow \infty}}
		= 
		\label{solved_limits} 
		\textrm{pr}_1 \circ \Psi^{t} (x,0_Y)
		=
		\Theta^{t} (x),
	\end{align}
where \eqref{defn_nonautonomous} follows immediately from \eqref{nonautonomous_semiflow}, \eqref{continuity_limit} is obtained by the continuity of $\mathrm{pr}_1$ and $\Psi$, and \eqref{solved_limits} relies on the attractivity of $0_Y$.
Thus for any ${y_0 \in \mathcal{B}(0_Y)}$, by definition
$\Phi_{y_0}$ is asymptotically autonomous with limit semiflow $\Theta$.
\end{proof}
		
		\begin{step}
			Every trajectory of 
			$\Psi$  
			converges to a hyperbolic equilibrium.
			
		\end{step}
		\begin{proof}
		\renewcommand{\qedsymbol}{$\blacktriangledown$}
			Together, Step 2, Fact \ref{misch}, and the boundedness (hence, precompactness) of forward trajectories of $\Psi$ imply that for each ${y_0 \in \mathcal{B}(0_Y)}$, every trajectory of $\Phi_{y_0}$ converges to $\mathcal{R}(\Theta)$, and asymptotic stability of \eqref{inner_theorem} ensures that every trajectory of $\Upsilon$ 
			converges to $0_Y$. Thus, in view of \eqref{flow_decomposition} it is clear that every trajectory of $\Psi$ 
			converges to ${\mathcal{R}(\Theta) \times \{0_Y\}}$, and all points in this set are hyperbolic equilibria by Step 1. Since hyperbolic equilibria are isolated, 
			 by continuity every trajectory
			 converges to a particular hyperbolic equilibrium.
		\end{proof}
		\begin{step}
	Almost no trajectories of \eqref{outer_theorem}-\eqref{inner_theorem} converge to an unstable equilibrium.
		\end{step}
		\begin{proof}
		\renewcommand{\qedsymbol}{$\blacktriangledown$}
			All points converging to a hyperbolic equilibrium lie on its global stable manifold, which (for an unstable equilibrium) is the union of countably many embedded submanifolds of positive codimension (see \cite[p. 73]{Palis1982} or \cite[Sec. 2.1]{Eldering2018}). Hence, this is a meager set of measure zero. Also, all unstable equilibria in ${X \times \mathcal{B}(0_Y)}$ are hyperbolic by Step 1, and there are countably many of these equilibria due to the isolation of hyperbolic equilibria and the second countability of ${X \times \mathcal{B}(0_Y)}$ \cite[Thm 2.50 and Prop. 3.11]{TopologicalLee}. Thus, the set of points in ${X \times \mathcal{B}(0_Y)}$ converging to an unstable equilibrium is a countable union of meager sets of measure zero and is thus meager (essentially by definition) and measure zero \cite[p. 128]{SmoothLee} in $X \times Y$. 
		\end{proof}
		\begin{step}
			Almost every trajectory of \eqref{outer_theorem}-\eqref{inner_theorem} converges to the stable equilibrium ${(0_X,0_Y)}$.
		\end{step}
		\begin{proof}
			Since $\mathcal{B}(0_Y)$ is full measure and residual in $Y$ by assumption, ${X \times \mathcal{B}(0_Y)}$ is full measure and residual in ${X \times Y}$. By Step 3, every initial condition in this set converges to a hyperbolic equilibrium, and by Step 4, the subset converging to an unstable equilibrium is meager and measure zero in $X \times Y$. Since the difference of a residual set of full measure by a meager set of measure zero is residual and full measure, the remainder constitutes a residual set of full measure in $X \times Y$ for which all initial conditions converge to the unique stable equilibrium ${(0_X,0_Y)}$, completing the proof.
		\end{proof}
		\renewcommand{\qedsymbol}{}
		\vspace{-20pt}
	\end{proof}

\begin{remark}
	The main potential pitfall of the unforced outer subsystem being only \textit{almost} globally asymptotically stable is the possibility of ``funneling'' a non-negligible (i.e. non-meager or  positive measure) set to a point $(x,0_Y)$, where $x$ is an unstable equilibrium of \eqref{unforced}. However, such behavior is precluded by the hyperbolicity of all unstable equilibria of \eqref{unforced}.
	This can be relaxed to the requirement that all unstable equilibria of \eqref{unforced} are isolated and have at least one eigenvalue with positive real part, similar to \cite{Angeli2010}. Then, the argument proceeds similarly, but relies on the center-stable manifold theorem instead of the stable manifold theorem.  Similarly, the hyperbolicity assumption on $0_X$ can be relaxed at the cost of local exponential stability. 
	We present the more succinct but less general result for clarity and brevity. 
	\end{remark}

\begin{corollary}[Upper Triangular System]
	\label{upper_triangular_stability}
	Consider an upper triangular system on ${X_1 \times X_2 \times \cdots \times X_n}$ given by
	\begin{subequations}
	\begin{align}
		\dot{x}_1 = f_1(x_1,x_2,\ldots,x_n)&, \label{outermost} \\
		\dot{x}_2 = f_2(x_2,\ldots,x_n)&, \label{second_system} \\[-3pt]
		 \ddots \quad \quad \quad \quad   & \nonumber \\[-3pt]
		\dot{x}_n = f_n(x_n)&, \label{innermost}
	\end{align}
	\end{subequations}
where for all ${i = 1, 2, \ldots, n}$, the unforced system
\begin{equation}
			\dot{x}_i = f_i(x_i,0_{i+1},0_{i+2},\ldots,0_n)
			\label{diagonal_system}
\end{equation}
is almost globally asymptotically stable with respect to ${0_i \in X_i}$ and its chain recurrent set contains only
hyperbolic equilibria. Then, the upper triangular system \eqref{outermost}-\eqref{innermost} is almost globally asymptotically stable and locally exponentially stable with respect to ${(0_1,0_2,\ldots,0_n)}$
 as long as
all forward trajectories of \eqref{outermost}-\eqref{innermost}
are bounded.
\end{corollary}

\begin{proof}
	The claim follows by induction. In particular, the claim is trivial for ${n=1}$, and 
	assuming it holds for ${n=k-1}$, the claim for ${n=k}$ follows by Theorem \ref{almost_global_stability} with \eqref{outermost} as the outer subsystem \eqref{outer_theorem} and \eqref{second_system}-\eqref{innermost} as the inner subsystem \eqref{inner_theorem}, i.e. ${x = x_1}$ and ${y = (x_2,x_3,\ldots,x_n)}$.
\end{proof}

\section{Hypotheses of the Main Results}

\label{sec:hypotheses}

We now explore the hypotheses of Theorem \ref{almost_global_stability} and Corollary \ref{upper_triangular_stability} in greater detail, showing how they can be verified.

\subsection{Gradient-Like Systems}

Systems with no chain recurrent points besides equilibria are often called ``gradient-like'', and the following fact shows that such a property is often easily verified.
We cannot include the proof due to space, but very similar notions are discussed in \cite[Sec. IV]{Angeli2010}, \cite[Cor. 2.4]{Benaim1995}, and \cite[Sec. 7.12]{Robinson1998}.

\begin{fact}
		\label{thm:equilibria}
		If $\mathcal{E}(\Phi)$ consists of isolated points and there is a proper\footnote{A function ${V : M \rightarrow \mathbb{R}}$ is \textit{proper} if it has compact sublevel sets, which morally generalizes the notion of ``radially unbounded'' functions on $\mathbb{R}^n$.}, continuous function ${V:M\to \R}$ 
		that is
		decreasing\footnote{A function ${f : \mathbb{R} \rightarrow \mathbb{R}}$ is \textit{decreasing} if ${f(t_2) < f(t_1)}$ whenever ${t_1 < t_2 }$. Note that this does \textit{not} imply $\dot{f}(t) < 0$ for all $t$, e.g. $t \mapsto -t^3$.} along nonequilibrium trajectories,
		then ${\mathcal{R}(\Phi) = \mathcal{E}(\Phi)}$.
\end{fact}

\begin{remark}
	From Fact \ref{thm:equilibria}, it is clear that Theorem 1 also holds if the assumption that \eqref{unforced} is almost globally asymptotically stable and gradient-like is replaced by the existence of a Lyapunov function for \eqref{unforced} around $0_X$ which is decreasing along all nonequilibrium trajectories.
	Some authors \cite{Benaim1995, Robinson1998} call this a \textit{strict} Lyapunov function, but the control community tends to reserve this term for Lyapunov functions with strictly negative derivative along nonequilibrium trajectories
		\cite{Santibanez1997}.
\end{remark}

\begin{remark}
	\label{summarize_chain_recurrence_section}
	Two important classes of systems to which Fact \ref{thm:equilibria} applies are as follows. 
	It can be shown that for a Riemannian manifold ${(Q,\kappa)}$,
	a strict Rayleigh dissipation $\nu$, and a proper Morse function ${V : Q \rightarrow \left[0,\infty\right)}$ 
	with a unique minimum at 
	${0_Q \in Q}$,
	both the gradient dynamics on $Q$ given by
				\begin{equation}
		\dot{q} = - \mathrm{grad}_\kappa\, V(q) 
		\label{gradient_descent}
	\end{equation}
	and Euler-Lagrange dynamics on $TQ$ given by\footnote{The maps ${\kappa^\flat, \nu^\flat : TQ \rightarrow T^*Q}$ and ${\kappa^\sharp, \nu^\sharp : T^*Q \rightarrow TQ}$ are the \textit{musical isomorphisms} with respect to the Riemannian metrics $\mu$ and $\nu$ \cite{BulloAndLewis2004}.}
	\begin{equation}
		\overset{\kappa}{\nabla}_{\dot{q}} \dot{q} = - \mathrm{grad}_\kappa\, V(q) - \kappa^\sharp \circ \nu^\flat (\dot{q}) 
		\label{euler_lagrange}
	\end{equation}
	are almost globally asymptotically stable and locally exponentially stable around ${0_Q \in Q}$ and ${0_{TQ} = (0_Q,0) \in TQ}$ respectively, and moreover all chain recurrent points of both systems are hyperbolic equilibria. An early, influential analysis of the previous stability properties is \cite{Koditschek1989}, while a detailed modern treatment can be found in \cite[Chap. 6]{BulloAndLewis2004}. The chain recurrence claim is immediate by Fact \ref{thm:equilibria} and the fact that the ``potential'' $V$ and the ``total energy''
	\begin{equation}
		\label{total_energy}
		W : (q,\dot{q}) \mapsto V(q) + \tfrac{1}{2}\kappa(\dot{q},\dot{q}),
	\end{equation}
	are respectively decreasing along all nonequilibrium trajectories of \eqref{gradient_descent} (cf. gradient descent) and \eqref{euler_lagrange} \cite[Prop. 4.66]{BulloAndLewis2004}. 
\end{remark}

\subsection{Boundedness of Forward Trajectories}

Forward boundedness is guaranteed when the outer subsystem evolves on a compact manifold. To use Theorem \ref{almost_global_stability} to certify the stability of a cascade evolving on a noncompact manifold (e.g. $\mathbb{R}^n$ or any tangent bundle), we require compositional criteria for forward boundedness. 
In this section, we give growth rate criteria suitable for our geometric setting on the ``{interconnection term}'' $\Sigma_h$ 
and a Lyapunov function for the unforced outer subsystem $\Sigma_x$.
The result is analogous to (and inspired by) \cite[Thm. 4.7]{Sepulchre1997}, which certifies forward boundedness in $\mathbb{R}^n$ using the standard Euclidean norm. 
In a Riemannian manifold $(X,\mu)$, we use instead 
the Riemannian distance and the dual norms on each tangent and cotangent space induced by the metric. We denote both norms by ${||\hspace{-1.5pt}\cdot\hspace{-1.5pt}||_\mu }$.

\begin{theorem}[Forward Boundedness of a Cascade]
	\label{boundedness}
	Consider a cascade on $X \times Y$ given by
	\begin{subequations}
		\begin{align}
			\dot{x} &= f(x,y), \label{outer_bound} \\
			\dot{y} &= g(y)
			.
			 \label{inner_bound} 
		\end{align}
	\end{subequations}
	 Suppose the following conditions hold on the subsystems:
\setlist[enumerate,1]{leftmargin=.75cm}
		\begin{enumerate}
			\item[$\Sigma_y\hspace{-2.5pt}:$] For \eqref{inner_bound}, ${0_Y \in Y}$ is a stable hyperbolic equilibrium. 
				\item[$\Sigma_x\hspace{-2.5pt}:$]
		${W : X \rightarrow \mathbb{R}_{\geq 0}}$ is a proper Lyapunov function  for 
		\begin{equation}
			\dot{x} = f(x,0_Y) \label{unforced_bound}
		\end{equation}
		such that for some constants ${\lambda \geq 0}, {d_0 \geq 1}$, 
		\begin{align}
			\left|\left|\mathrm{d}W_x\right|\right|_{{\mu}} \dist(0_X,x) \leq \lambda \, W(x)
			\label{outer_lyapunov_growth_restriction}
		\end{align}
		for all 
		${(x,y) \in \{x \in M : \dist(0_X,x) \geq d_0 \} \times \mathcal{B}(0_Y)}$.
				\item[$\Sigma_h\hspace{-2.5pt}:$]
For some continuous maps 
${\alpha, \, \beta : \mathcal{B}(0_Y) \hspace{-1pt} \to \hspace{-1pt} \mathbb{R}_{\geq 0}}$ that are
vanishing and differentiable at $0_Y$,
the
interconnection term
${h : (x,y) \mapsto f(x,y) - f(x,0_Y)}$
satisfies 
\begin{align}
	\left|\left|h(x,y)\right|\right|_{{\mu}} \leq \alpha(y) \dist(0_X,x) + \beta(y).
	\label{interconnection_growth_restriction}
\end{align}
			\end{enumerate}
	Then, the trajectory of \eqref{outer_bound}-\eqref{inner_bound} through any initial condition ${(x_0,y_0) \in X \times \mathcal{B}(0_Y)}$ is bounded for all forward time.
\end{theorem}

\begin{proof}
	
	Since $W$ is a proper Lyapunov function for \eqref{unforced_bound}, the forward trajectory through any initial condition of the form ${(x_0,0_Y)}$ is bounded, so it suffices to consider initial conditions $(x_0,y_0)$ with ${y_0\neq 0_Y}$. Fix ${(x_0,y_0)\in X\times \mathcal{B}(0_Y)}$ with ${y_0\neq 0_Y}$ and let $\big(x(t),y(t)\big)$ denote its forward trajectory. 
	
	\begin{step}
		\label{bound_alpha_beta}
		There exist positive constants $A$ and $\omega$ such that
		$\alpha\big(y(t)\big) + \beta\big(y(t)\big)\leq Ae^{-\omega t}$ for all $t\geq 0$.
	\end{step}
	\begin{proof}
		\renewcommand{\qedsymbol}{$\blacktriangledown$}
		Let ${d(t)\coloneqq \text{dist}(0_Y,y(t)) > 0}$.
		Since 
		$0_Y$ is hyperbolic, 
there exist $C_0, \omega > 0$ such that, for all $t\geq 0$,
		\begin{equation}\label{eq:d-bound}
			d(t)\leq C_0e^{-\omega t}.
		\end{equation}
		Next, since 
		$\alpha$ and $\beta$ are vanishing and differentiable at $0_Y$, a local coordinate calculation (using uniform equivalence of continuous Riemannian metrics over compact sets) shows
		\begin{equation}
			\limsup_{t\to\infty}\frac{\alpha\big(y(t)\big) + \beta\big(y(t)\big)}{d(t)} < \infty. 
		\end{equation}
		The quotient in the previous limit is a continuous function of $t$ and thus is bounded for all ${t \geq 0}$ by some ${C_1 > 0}$.
		With \eqref{eq:d-bound}, this yields the desired bound
		with $A:= C_0 C_1$.
	\end{proof}
	
	\begin{step}
		$W\big(x(t)\big)$ is bounded for all $t \geq 0$.
	\end{step}
	\begin{proof}
		\renewcommand{\qedsymbol}{$\blacktriangledown$}
		
		Since $W$ is a Lyapunov function for \eqref{unforced_bound}, we have
		\begin{align}
			\dot{W} 
			&\leq \mathrm{d}W_x \big( h(x,y) \big)
			\leq \left|\left|\mathrm{d}W_x \right|\right|_{{\mu}} \,
			\left|\left|h(x,y)\right|\right|_{{\mu}}
			\\&\leq \left|\left|\mathrm{d}W_x \right|\right|_{{\mu}} \,
			\big(\alpha(y) \dist(0_X,x) + \beta(y)\big),
		\end{align}
		from
		 \eqref{interconnection_growth_restriction}.
		Hence, whenever 
$\dist(0_X,x) \geq d_0 \geq 1$, we have
\begin{align}
	\dot{W}
	&\leq 
	||\mathrm{d}W_x ||_{{\mu}} \dist(0_X,x)  \big(\alpha(y) + \beta(y)\big).
\label{for_large_distances}
		\end{align}
	Define  
	$W_0 = \sup_{\{x \, : \, \dist(0_X,x) \, \leq \, d_0\}} W(x)$ and
consider any ${t_2 \geq t_1 \geq 0}$ where ${W(x([t_1,t_2]))\subseteq [ W_0,\infty)}$.
				Then for all ${t\in [t_1,t_2]}$,  \eqref{outer_lyapunov_growth_restriction}, \eqref{for_large_distances}, and the conclusion of Step 1 imply 
				\begin{equation}
						\tfrac{d}{dt}W\big(x(t)\big)\leq
						\lambda A e^{-\omega t} \, W\big(x(t)\big). \label{time_varying_W_bound} 
					\end{equation}
				By Gr\"{o}nwall's inequality, we obtain the bound
				\begin{align}
						W(x(t_2)) \leq  e^ {\int_{t_1}^{t_2} \lambda A e^{\text{-}\omega t} dt}  W(x(t_1))
						\leq e^{\frac{\lambda A}{\omega}}  W(x(t_1)).
				\end{align}
				This implies that for all $t\geq 0$,
				$$W(x(t))\leq C:= e^\frac{\lambda A}{\omega} \max\big\{W_0,W\big(x(0)\big)\big\} . \qedhere $$
	\end{proof}
	\noindent 
	Thus, since $W$ is proper and $0_Y$ is attractive,
	 it follows that 
	$\big(x(t),y(t)\big)$ is bounded for all $t \geq 0$.
\end{proof}
	A similar approach can be iterated (as in Corollary \ref{upper_triangular_stability}) to certify forward boundedness of an upper triangular system.

\section{Application of the Results}

\label{sec:application}

We now revisit the motivating example \eqref{outer_example}-\eqref{inner_example}.
It is easily verified that \eqref{inner_example} takes the form of the Euler-Lagrange dynamics \eqref{euler_lagrange} 
for the kinetic energy metric and Rayleigh dissipation ${\kappa = \nu = d \phi \otimes d\phi}$ and the Morse potential function ${V : \mathbb{S}^1 \rightarrow \mathbb{R}, \, \phi \mapsto 1-\cos \phi}$.
Thus by Remark \ref{summarize_chain_recurrence_section}, \eqref{inner_example} is almost globally asymptotically stable and locally exponentially stable with respect to ${y = (\phi,\dot{\phi}) = (0,0)}$, and moreover its chain recurrent set consists solely of hyperbolic equilibria. Clearly, the same is true for
${x = (\theta,\dot{\theta}) = (0,0)}$ with respect to the restriction of \eqref{outer_example} to ${y = (\phi,\dot{\phi}) =(0,0)}$. 

By Theorem \ref{almost_global_stability}, for almost global asymptotic stability it will suffice to show forward boundedness, which
we accomplish using 
the total energy \eqref{total_energy}. 
The natural choice of metric on the tangent bundle ${X = T\mathbb{S}^1}$ is the Sasaki metric \cite{Gudmundsson2002} i.e.
${\tilde{\kappa} = d \theta \otimes d\theta + d \dot\theta \otimes d\dot\theta}$. Then, considering (without loss of generality) the range of angles ${\theta \in [-\pi,\pi)}$, we have
\begin{align}
	\underbrace{
			\sqrt{\sin^2\theta + \dot{\theta}^2}
	}_{||\mathrm{d}W_x||_{\tilde{\kappa}}}  
	\underbrace{\sqrt{\theta^2  + \dot{\theta}^2}}_{
		\dist(0_X,x)
	}
	&\leq \theta^2 + \dot{\theta}^2 
	\leq 
	\underbrace{\tfrac{\pi^2}{2}}_\lambda 
	\underbrace{(1-\cos \theta +  \tfrac{\dot{\theta}^2}{2} )}_{W(x)}, 
	\nonumber
\end{align}
since ${|\sin \theta| \leq |\theta|}$ and ${\theta^2 \leq \frac{\pi^2}{2}(1-\cos\theta)}$ for ${\theta \in [-\pi,\pi)}$,
verifying that \eqref{outer_lyapunov_growth_restriction} holds.
Furthermore,  we compute
\begin{align}
	\underbrace{(1-\cos 2\phi)
		(\sin \theta + \dot{\theta})}_{
	|| h(x,y) ||_{\tilde{\kappa}}
	}
	&\leq 
	\underbrace{(1-\cos 2\phi) \sqrt{2}}_{\alpha(y)}
	\underbrace{\sqrt{\cramped{\theta^2 + \dot{\theta}^2}}
	}_{\dist(0_X,x)},
\end{align}
so \eqref{interconnection_growth_restriction} holds as well.
Thus it follows by Theorem \ref{boundedness} that all forward trajectories of \eqref{outer_example}-\eqref{inner_example} with ${y = (\phi,\dot{\phi})}$ starting in the basin of attraction of \eqref{inner_example} are bounded, and so the system 
is almost globally asymptotically stable and locally exponentially stable with respect to ${(0,0,0,0) \in T\mathbb{T}^2}$.

\section{Discussion}

\label{sec:discussion}

The disturbance robustness of systems with some similar properties, and the connection to gradient-like systems, was discussed in \cite[Sec. IV]{Angeli2010}. However, those results (when combined with \cite{Angeli2004}) can only certify the stability of a cascade if the outer subsystem is almost globally input to state stable, requiring also ``ultimate boundedness'' with respect to \textit{any} bounded disturbance, absent from our motivating example.

Our main results show that an upper triangular system consisting of almost globally asymptotically stable, gradient-like subsystems with no degenerate equilibria is itself almost globally asymptotically stable if all forward trajectories are bounded. Since \textit{globally} asymptotically stable systems are gradient-like (with a single chain recurrent point, i.e. the stable equilibrium), the result is analogous to the fact that a cascade of globally asymptotically stable subsystems is globally asymptotically stable as long as all trajectories are bounded \cite[Prop. 4.1]{Sepulchre1997}. Our second theorem generalizes a classic compositional method of verifying forward boundedness using growth rate criteria on the interconnection term and a Lyapunov function for the unforced outer subsystem. Unfortunately, the Riemannian distance function used in our condition can be difficult to compute explicitly in complex examples, but even loose upper and lower bounds on this distance could potentially be used to verify the inequalities.

Gradient-like dynamics are common in the closed-loop subsystems of geometric controllers \cite{Lee2010,Sreenath2013}. Indeed, since cascades of mechanical systems with suitable dissipation and potential enjoy the required  stability and chain recurrence properties, we see promising directions for the constructive synthesis of cascaded geometric controllers with almost global asymptotic stability for underactuated robotic systems, e.g. those possessing a geometric flat output (such as quadrotors and aerial manipulators) \cite{Welde2023}, which enjoy a cascade-like structure where the evolution of the system in the shape space is uniquely determined by the evolution in the symmetry group. Indeed, for a reference trajectory with constant acceleration, the error dynamics of the geometric quadrotor controller proposed in \cite{Lee2010} take the form \eqref{outer_general}-\eqref{inner_general}, and the subsystems are dissipative mechanical systems.

\section{Conclusion}

\label{sec:conclusion}

In this work, we present sufficient conditions for the almost global asymptotic stability of a cascade in which the subsystems are only almost globally asymptotically stable. The result is extended inductively to upper triangular systems of arbitrary size. The approach relies on the forward boundedness of trajectories (which can be verified by growth rate criteria on the interconnection term and on a Lyapunov function for the unforced outer subsystem) and the absence of chain recurrent points other than hyperbolic equilibria in the unforced outer subsystem. 
The results are analogous to classic results for cascades of globally asymptotically stable systems.
 The compositional nature of the criteria facilitates stability verification for arbitrarily complex cascades, so long as the subsystems enjoy certain fundamental  properties.

\bibliographystyle{IEEEtran}
\bibliography{IEEEabrv,refs}

\clearpage

\appendix

\section*{A. Chain Recurrence and Decreasing Functions}

In this appendix, we localize the chain recurrent set $\mathcal{R}(\Phi)$ of a continuous semiflow ${\Phi^t : M \rightarrow M}$ to a particular subset of the state space, with the aid of a function which is decreasing along all trajectories outside this subset. 
While similar results appear to be known \cite{Benaim1995}, we do not know of a reference providing these facts in our exact setting (e.g. for semiflows on possibly noncompact manifolds).

\begin{theorem}\label{th:R-subset-E}
	Assume there exists a proper continuous function ${V\colon M\to \R}$ and a subset ${S\subseteq M}$ such that $V(S)$ is nowhere dense in $\R$ and $V$ is decreasing on trajectories outside of $S$.
	Then ${\mathcal{R}(\Phi)\subseteq S}$.
\end{theorem}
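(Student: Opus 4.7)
The plan is to prove the contrapositive: given $x \notin S$, I will exhibit $\varepsilon, T > 0$ ruling out any closed $(\varepsilon, T)$-chain at $x$. The governing intuition is that $V$ serves as a Lyapunov-type function whose nowhere-dense set $V(S)$ leaves open intervals in $\R$ that act as impassable barriers for any sufficiently fine chain. I read the hypothesis as saying that $V \circ \gamma$ is non-increasing along every trajectory $\gamma$ of $\Phi$ and strictly decreasing at each point of $\gamma$ that lies outside $S$.

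Since $x \notin S$, there exists $T_0 > 0$ with $V(\Phi^{T_0}(x)) =: c_1 < c_0 := V(x)$. Using the nowhere-density of $V(S)$ in $\R$, I would choose a closed barrier interval $[c - \delta, c + \delta] \subset (c_1, c_0)$ disjoint from $V(S)$. Properness of $V$ makes $K := V^{-1}([c - \delta, c + \delta])$ compact, and by construction $K \cap S = \emptyset$, so $V$ strictly decreases along every trajectory arc lying in $K$.

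The key intermediate claim is a uniform crossing time: there exists $\tau' > 0$ such that every point $y$ with $V(y) \leq c + \delta$ satisfies $V(\Phi^{\tau'}(y)) < c - \delta$. I would prove this by a LaSalle-style argument --- a trajectory trapped in $K$ for all time would have a nonempty $\omega$-limit set $\omega(y) \subset K$, which is flow-invariant, disjoint from $S$, and on which $V$ would have to be constant, contradicting the strict decrease off $S$. Pointwise existence of an individual crossing time then upgrades to a uniform $\tau'$ by continuity of $\Phi$ and $V$ together with the compactness of $K$.

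With $T := \max(T_0, \tau')$ fixed, I would pick $\varepsilon > 0$ small enough that every $\varepsilon$-move changes $V$ by less than $\delta/2$; this is legitimate because all relevant chain points lie in the compact sublevel set $V^{-1}((-\infty, c_0])$, on which $V$ is uniformly continuous. For any purported closed $(\varepsilon, T)$-chain $(x_0, \ldots, x_n)$ at $x$, I would show by induction on $i \geq 1$ that $V(x_i) < c$: at $i = 1$ this follows from $V(\Phi^{t_1}(x_0)) \leq c_1$ (by the choice of $T_0$ and monotonicity) plus the $\delta/2$ jump bound, and the inductive step applies the uniform crossing lemma to $x_{i-1}$ (which satisfies $V(x_{i-1}) < c \leq c + \delta$) to push the flow endpoint below $c - \delta$ before the closing jump. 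This contradicts $V(x_n) = V(x_0) = c_0 > c$, so no such chain exists and $x \notin \mathcal{R}(\Phi)$. The main subtlety is the uniform crossing lemma, where compactness (from properness) must be combined with the strict decrease off $S$; the remaining inductive bookkeeping with the modulus of continuity of $V$ on sublevel sets is routine.
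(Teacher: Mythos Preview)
Your approach is essentially identical to the paper's: both argue by contrapositive, locate a barrier interval $[a,b]$ in the range of $V$ along the forward orbit of $x$ that misses $V(S)$, establish a uniform crossing time $T$ forcing $\Phi^{[T,\infty)}(\{V\leq b\})\subseteq\{V\leq a\}$, and then pick $\varepsilon$ so that an $\varepsilon$-jump cannot carry a point from $\{V\leq a\}$ back into $\{V\geq b\}$, whence no closed $(\varepsilon,T)$-chain at $x$ can exist. Your sketch of the uniform crossing lemma via $\omega$-limit sets and compactness is more explicit than the paper's, which simply asserts the existence of $T$.

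There is one small circularity in your $\varepsilon$ step. You invoke uniform continuity of $V$ on $V^{-1}((-\infty,c_0])$ to bound the $V$-change under an $\varepsilon$-jump, justified by the claim that ``all relevant chain points lie in'' this sublevel set. But that containment is exactly what the induction is supposed to establish, and a priori the jump target $x_i$ could land outside the sublevel set, where your modulus of continuity does not apply. The paper sidesteps this by choosing $\varepsilon$ as (half) the positive distance between the compact set $\{V\leq a\}$ and the disjoint closed set $\{V\geq b\}$; then an $\varepsilon$-jump from $\{V\leq a\}$ automatically lands in $\{V<b\}$, with no appeal to uniform continuity and no circularity. Replacing your $\delta/2$ modulus argument with this metric separation between sublevel and superlevel sets repairs the gap with no change to the rest of your outline.
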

\begin{proof}
	Fix any $x\not \in S$.
	Since $V(S)$ is nowhere dense and ${x\not  \in S}$, there exist $b>a>0$ such that 
	\begin{equation}
		\begin{gathered}
			[a,b]\subseteq V(\Phi^{[0,\infty)}(x)) \text{ and }
			\{a\leq V \leq b\}\cap S = \varnothing.
		\end{gathered}
	\end{equation}
	Since $V$ is decreasing along trajectories outside of $S$ and since $V$-sublevel sets are compact, this implies  the existence of $T>0$ such that $V(\Phi^T(x)) \leq a$ and $\Phi^{[T,\infty)}(\{V\leq b\})\subseteq \{V\leq a\}$.
	Compactness of $\{V\leq a\}$ also implies the existence of $\varepsilon > 0$ such that the distance between any point in $\{V\leq a\}$ and any point in $\{V \geq b\}$ is at least $2\varepsilon$. 
	By construction there does not exist an $(\varepsilon, T)$-chain from $x$ to $x$, so $x$ is not chain recurrent. 
	This completes the proof.
\end{proof}

\begin{corollary}\label{co:E-isolated-components}
	Assume there exists a proper continuous function ${V\colon M\to \R}$ and a subset ${S\subseteq M}$ such that $V$ is constant on each connected component of $S$, each connected component of $S$ is isolated, and  $V$ is decreasing on trajectories outside of $S$.
	Then ${\mathcal{R}(\Phi)\subseteq S}$.
\end{corollary}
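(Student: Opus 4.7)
The strategy is to reduce to Theorem \ref{th:R-subset-E} by verifying that $V(S)$ is nowhere dense in $\R$. In fact I claim the stronger statement that $V(S)$ is countable: since $V$ is constant on each connected component of $S$, this will follow if $S$ has only countably many connected components.

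To prove countability, I exhaust $M$ by the compact sublevel sets $K_n := V^{-1}([-n,n])$ (compact by properness of $V$) and show each $K_n$ meets only finitely many components of $S$. Suppose not: then there exist pairwise distinct components $C_1, C_2, \ldots$ of $S$ and points $p_k \in C_k \cap K_n$, and by compactness we extract $p_{k_j} \to p^* \in K_n$. Under the natural reading that isolation of each component $C$ provides an open neighborhood $U_C \subseteq M$ with $U_C \cap S = C$ (so in particular $\overline{S} = S$ and hence $p^* \in S$), the point $p^*$ lies in some component $C^*$, and eventually $p_{k_j} \in U_{C^*} \cap S = C^*$, contradicting $C_{k_j} \neq C^*$. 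Hence $V(S) = \bigcup_n V(S \cap K_n)$ is a countable union of finite sets, so countable and therefore nowhere dense in $\R$.

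Combined with the hypothesis that $V$ is decreasing along trajectories outside of $S$, all hypotheses of Theorem \ref{th:R-subset-E} are satisfied, yielding $\mathcal{R}(\Phi) \subseteq S$. The main obstacle is the component-counting step, specifically pinning down the isolation hypothesis so that it yields uniform finiteness on compact sets; once this is handled (and in the anticipated application, where $S = \mathcal{E}(\Phi)$, closedness of $S$ is automatic), the conclusion follows immediately from Theorem \ref{th:R-subset-E}.
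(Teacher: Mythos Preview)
Your approach matches the paper's: both reduce to Theorem~\ref{th:R-subset-E} by showing that $V(S)$ is countable (hence nowhere dense), using compactness of $V$-sublevel sets together with isolation of components to obtain local finiteness. The paper phrases this as ``$S_t := S \cap \{V \leq t\}$ is compact and has isolated, hence finitely many, components'' and then takes a countable union over $t$; your sequential compactness argument on $K_n$ is the same idea in different clothing.

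One correction: your parenthetical ``(so in particular $\overline{S} = S$)'' does not follow from the isolation hypothesis as you stated it. The condition that each component $C$ admits an open $U_C \subseteq M$ with $U_C \cap S = C$ does \emph{not} force $S$ to be closed; take $S = \{1/n : n \geq 1\} \subseteq \R$, where every singleton is isolated in this sense yet $0 \in \overline{S} \setminus S$, so your limit point $p^*$ could escape $S$. The paper's proof carries the same tacit closedness assumption when it asserts that $S_t$ is compact. You are right that closedness holds automatically in the intended application $S = \mathcal{E}(\Phi)$, so Corollary~\ref{co:equilibria} is unaffected; but the implication as written is false and should be replaced by an explicit closedness hypothesis (or simply left as the flagged obstacle you already identify).
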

\begin{proof}
	For each ${t\in \R}$, the set ${S_t\coloneqq S\cap \{V\leq t\}}$ is compact since $V$ is proper, so each $S_t$ has finitely many components since components of $S$ (hence also $S_t$) are isolated.
	This implies that ${V(S_{t+1}\setminus S_t)\subseteq (t,t+1]}$ is finite for each $t$.
	Thus, ${V(S)=\bigcup_{n=1}^{\infty}V(S_{n+1}\setminus S_n)}$ is not dense in any nonempty open subset of $\R$, so $V(S)$ is nowhere dense. 
	The desired result now follows from Theorem~\ref{th:R-subset-E}.
\end{proof}

We conclude the analysis with the derivation of Fact \ref{thm:equilibria}, which is related to the analysis in \cite[Cor. 2.4]{Benaim1995}, however the setting of that work differs from our own, since it considers stochastic processes evolving on $\mathbb{R}^n$.

\begin{corollary}\label{co:equilibria}
		If $\mathcal{E}(\Phi)$ consists of isolated points and there is a proper, continuous function ${V:M\to \R}$ 
that is
decreasing along nonequilibrium trajectories,
then ${\mathcal{R}(\Phi) = \mathcal{E}(\Phi)}$.
\end{corollary}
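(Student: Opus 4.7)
The plan is to derive this corollary directly from Corollary \ref{co:E-isolated-components} by taking $S = \mathcal{E}(\Phi)$, and then to obtain the reverse inclusion from the elementary observation that equilibria are always chain recurrent. The role of the isolation assumption on $\mathcal{E}(\Phi)$ is precisely to ensure that the connected-component hypotheses of Corollary \ref{co:E-isolated-components} are met in a trivial way.

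More concretely, first I would verify the inclusion $\mathcal{R}(\Phi) \subseteq \mathcal{E}(\Phi)$. Setting $S = \mathcal{E}(\Phi)$, the assumption that the equilibrium set consists of isolated points means each connected component of $S$ is a singleton; $V$ is therefore (trivially) constant on each component, and each component is isolated by hypothesis. The remaining requirement that $V$ be decreasing on trajectories outside $S$ is literally the assumed ``decreasing along nonequilibrium trajectories'' condition. Thus Corollary \ref{co:E-isolated-components} applies and delivers $\mathcal{R}(\Phi) \subseteq S = \mathcal{E}(\Phi)$.

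Next, I would dispatch the reverse inclusion $\mathcal{E}(\Phi) \subseteq \mathcal{R}(\Phi)$: for any equilibrium $x$ and any $\varepsilon, T > 0$, the constant ``chain'' with $x_0 = x_1 = x$ and $t_1 = T+1$ satisfies $\mathrm{dist}(\Phi^{t_1}(x), x) = 0 < \varepsilon$, so $x$ is chain recurrent. (This step was already invoked implicitly in Step 1 of the proof of Theorem \ref{almost_global_stability}.) Combining the two inclusions yields the equality.

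There is essentially no obstacle here — the work has been done in Theorem \ref{th:R-subset-E} and Corollary \ref{co:E-isolated-components}. The only thing to watch is making sure the translation of hypotheses is honest: one needs to note explicitly that ``isolated points'' in the statement is strong enough to imply ``each connected component is isolated'' (since singletons are their own components) and ``$V$ constant on each component'' (vacuously for singletons), so that the premises of Corollary \ref{co:E-isolated-components} are genuinely in force.
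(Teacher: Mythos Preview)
Your proposal is correct and matches the paper's proof essentially verbatim: the paper also takes $S = \mathcal{E}(\Phi)$, notes that singleton components make $V$ trivially constant on each, invokes Corollary~\ref{co:E-isolated-components} for $\mathcal{R}(\Phi)\subseteq \mathcal{E}(\Phi)$, and cites $\mathcal{E}(\Phi)\subseteq \mathcal{R}(\Phi)$ for the reverse inclusion.
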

\begin{proof}
	Since $\mathcal{E}(\Phi) \subseteq \mathcal{R}(\Phi)$, it suffices to show that $\mathcal{R}(\Phi)\subseteq \mathcal{E}(\Phi)$.
	Since each connected component of ${S = \mathcal{E}(\Phi)}$ is a singleton, $V$ is automatically constant on each component of $S$, so the desired result follows from Corollary~\ref{co:E-isolated-components}.
\end{proof}

\begin{remark}
	If the Riemannian metric on $M$ is replaced with a new one, the conclusions of Theorem~\ref{th:R-subset-E} and Corollary~\ref{co:E-isolated-components} imply that the new chain recurrent set is still contained in $S$, and the conclusion of Corollary~\ref{co:equilibria} implies that the new chain recurrent set coincides with the old, i.e. $\mathcal{E}(\Phi)$. 
\end{remark}

\section*{B. Two Classes of Gradient-Like Systems}

		In this appendix, we consider two important classes of systems of particular relevance to geometric control design, which are already widely known to be almost globally asymptotically stable. With the aid of Corollary \ref{co:equilibria}, we verify the lesser-known fact that such systems have a chain recurrent set consisting solely of hyperbolic equilibria.
		The following facts are not particularly novel (see e.g. the discussion in \cite[Sec. IV]{Angeli2010} and \cite{Benaim1995}), but they are relevant to our larger interests, so we present them for completeness.

		\subsection{Gradient Systems}

		We first consider dynamical systems induced by descending the gradient of a \textit{Morse function} (i.e. a function whose critical points are all nondegenerate \cite{BulloAndLewis2004}) with a unique minimum. We note that Morse functions with unique minima, including ``perfect'' ones with the minimum possible number of critical points, are well-known for those manifolds typically encountered in geometric control.

		\begin{proposition}[Gradient System]
			\label{first_order_dynamics}
			For a Riemannian manifold ${(Q,\kappa)}$ and a proper Morse function ${V : Q \rightarrow \left[0,\infty\right)}$ 
			with a unique minimum at 
			${0_Q \in Q}$, the dynamical system 
			\begin{equation}
				\dot{q} = - \mathrm{grad}_\kappa\, V(q) 
				\label{gradient_descent}
			\end{equation}
			is almost globally asymptotically stable and locally exponentially stable with respect to $0_Q$, and all chain recurrent points of \eqref{gradient_descent} are hyperbolic equilibria.
		\end{proposition}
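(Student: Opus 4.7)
The plan is to verify the three conclusions by exploiting the Morse structure of $V$, invoking Corollary~\ref{co:equilibria} to pin down the chain recurrent set and the global stable manifold theorem to handle almost global convergence.

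First, I would identify the equilibria of \eqref{gradient_descent} with the critical points of $V$, which are isolated by the Morse assumption. At any critical point $q^*$, a local (orthonormal frame) computation expresses the linearization of $-\mathrm{grad}_\kappa V$ as a similarity of minus the Hessian of $V$ at $q^*$, which has no zero eigenvalues by the Morse property; thus every equilibrium is hyperbolic. At the unique minimum $0_Q$ the Hessian is positive definite, so all eigenvalues of the linearization are negative, yielding local exponential stability of $0_Q$.

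Second, I would verify the hypotheses of Corollary~\ref{co:equilibria} applied to $V$. Along any trajectory of \eqref{gradient_descent} one has $\dot{V} = -\|\mathrm{grad}_\kappa V\|_\kappa^2$, which is strictly negative away from equilibria, so $V$ is strictly decreasing on every nonequilibrium trajectory. Combined with the properness and continuity of $V$ and the isolation of $\mathcal{E}(\Phi)$, Corollary~\ref{co:equilibria} yields $\mathcal{R}(\Phi) = \mathcal{E}(\Phi)$, every point of which is a hyperbolic equilibrium by the first step.

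Third, I would establish almost global asymptotic stability. Since $V$ is proper and nonincreasing along trajectories, every forward orbit lies in a compact sublevel set, hence is precompact; its $\omega$-limit set is nonempty, compact, connected, and (by LaSalle, or by containment in $\mathcal{R}(\Phi) = \mathcal{E}(\Phi)$) sits inside the isolated set of equilibria, so it is a single hyperbolic equilibrium. Any initial condition whose trajectory converges to an equilibrium $q^* \neq 0_Q$ must lie in the stable manifold of $q^*$, which by the global stable manifold theorem is an injectively immersed submanifold of codimension equal to the Morse index of $V$ at $q^*$, and hence at least $1$. Properness of $V$ implies finitely many critical points per sublevel set, so $\mathcal{E}(\Phi) \setminus \{0_Q\}$ is at most countable, and the union of the associated stable manifolds is a countable union of positive-codimension submanifolds---therefore meager and of measure zero. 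Its complement is a residual set of full measure on which every trajectory converges to $0_Q$. The main obstacle I expect is the bookkeeping in this last step: confirming that the stable manifolds of non-minimum equilibria together form a meager, measure-zero subset, which requires combining the global stable manifold theorem with the enumerability of critical points afforded by the Morse and properness hypotheses. Everything else reduces to routine linearization and the already-established Corollary~\ref{co:equilibria}.
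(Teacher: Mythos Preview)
Your proposal is correct and follows essentially the same approach as the paper: both use $\dot V = -\|\mathrm{grad}_\kappa V\|_\kappa^2$ together with Corollary~\ref{co:equilibria} to identify the chain recurrent set with the (hyperbolic) equilibria, and the Morse property to obtain hyperbolicity and local exponential stability of $0_Q$. The only difference is cosmetic: the paper outsources the almost global asymptotic stability claim to \cite{Koditschek1989} (extending to the noncompact case via forward invariance of compact sublevel sets), whereas you spell out the stable-manifold/countability argument directly---which is precisely the argument the paper itself uses in the companion Proposition~\ref{second_order_dynamics}.
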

		
		\begin{proof}
			The almost global asymptotic stability of \eqref{gradient_descent} with respect to $0_Q$ is proved in \cite[Proposition 2.1]{Koditschek1989} for compact $Q$. However, the extension to the noncompact case is immediate since the sublevel sets of $V$ are compact and forward invariant, since by direct computation, 
			\begin{align}
				\dot{V} 
				&= dV(-\mathrm{grad}_\kappa V)
				= -\kappa\big(\mathrm{grad}_\kappa V, \mathrm{grad}_\kappa V \big) \leq 0.
			\end{align}
			Since the equilibria of \eqref{gradient_descent} are simply the critical points of $V$, the nondegeneracy of the critical points of Morse functions ensures hyperbolicity and therefore the local exponential stability of $0_Q$.
			Finally, since $V$ is decreasing on nonequilibrium trajectories and hyperbolic equilibria are isolated, Corollary \ref{co:equilibria} implies that the chain recurrent set of \eqref{gradient_descent} is exactly the set of equilibria.
		\end{proof}

		\subsection{Dissipative Mechanical Systems}
		
		We now turn our attention to the important class of dissipative mechanical systems arising from kinetic energy, potential energy, and damping. Such systems have been studied at length, since the introduction of artificial dissipation and potential shaping via feedback can result in closed-loop dynamics of this form with desirable limit behavior. 
		We direct the reader to the seminal work \cite{Koditschek1989} which studies the global stability properties of such systems, as well as the more recent reference \cite[Chap. 6]{BulloAndLewis2004} which provides a comprehensive and detailed overview.
		 Closed loop dynamics of this form have enabled trajectory tracking on arbitrary Lie groups 
		 and have also featured in the inner and outer subsystems of cascaded geometric controllers for underactuated robotic systems \cite{Lee2010,Sreenath2013}.

		\begin{proposition}[Dissipative Mechanical System]
			\label{second_order_dynamics}
			For a Riemannian manifold ${(Q,\kappa)}$,
			a strict Rayleigh dissipation $\nu$, and a proper Morse function ${V : Q \rightarrow \left[0,\infty\right)}$ 
			with a unique minimum at 
			${0_Q \in Q}$,
			the Euler-Lagrange dynamical system
			\begin{equation}
				\overset{\kappa}{\nabla}_{\dot{q}} \dot{q} = - \mathrm{grad}_\kappa\, V(q) - \kappa^\sharp \circ \nu^\flat (\dot{q}) 
				\label{appendix:euler_lagrange}
			\end{equation}
			is almost globally asymptotically stable and locally exponentially stable with respect to ${0_{TQ} = (0_Q,0) \in TQ}$, and all chain recurrent points of \eqref{appendix:euler_lagrange} are hyperbolic equilibria.
		\end{proposition}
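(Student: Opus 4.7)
The plan is to parallel the proof of Proposition \ref{first_order_dynamics}, with the total energy
\[
W : TQ \to [0,\infty), \quad (q,\dot q) \mapsto V(q) + \tfrac{1}{2}\kappa(\dot q,\dot q),
\]
playing the role that $V$ played there. A direct computation yields $\dot W = -\nu(\dot q,\dot q) \leq 0$, and $W$ is proper since $V$ is proper and the kinetic energy term is proper in the fiber direction. Sublevel sets of $W$ are therefore compact and forward invariant, so \eqref{appendix:euler_lagrange} induces a complete semiflow on $TQ$ whose equilibrium set coincides with $\{(q^*,0) : dV(q^*) = 0\}$, a discrete set by the Morse property of $V$.

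I would next establish hyperbolicity at every equilibrium. In local orthonormal coordinates about a critical point $q^*$, the linearization takes the block form
\[
\begin{pmatrix} 0 & I \\ -H & -D \end{pmatrix},
\]
where $H$ is the Hessian of $V$ at $q^*$ (symmetric and nondegenerate by the Morse property) and $D$ is a positive-definite matrix representing $\nu$ at $q^*$ (by strictness of the Rayleigh dissipation). A short linear-algebraic argument rules out purely imaginary eigenvalues: if $\lambda = i\omega$ admitted an eigenvector $v = v_R + i v_I$, splitting the identity $(H-\omega^2 I)v = -i\omega D v$ into real and imaginary parts and pairing suitably with $v_R$ and $v_I$ using symmetry of $H$ would force $\omega(v_R^T D v_R + v_I^T D v_I) = 0$, contradicting positive-definiteness of $D$ (the case $\omega = 0$ being ruled out by nondegeneracy of $H$). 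At $0_Q$, $H$ is additionally positive definite, so the quadratic energy of the linearization is a strict Lyapunov function, yielding local exponential stability of $0_{TQ}$.

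To conclude almost global asymptotic stability, I would invoke \cite[Prop. 4.66]{BulloAndLewis2004} (cited in Remark \ref{summarize_chain_recurrence_section}) to assert that $W$ is \emph{decreasing} along every non-equilibrium trajectory in the sense required by Corollary \ref{thm:equilibria}. Combined with properness of $W$ and isolation of the hyperbolic equilibria, this implies that each forward trajectory converges to some equilibrium. The stable manifolds of the countably many unstable hyperbolic equilibria have positive codimension by the global stable manifold theorem, so their union is meager and of measure zero; the complement consists of trajectories converging to $0_{TQ}$. Finally, ${\mathcal{R}(\Phi) = \mathcal{E}(\Phi)}$ follows from Corollary \ref{co:equilibria}. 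The main obstacle I anticipate is the subtlety that $\dot W$ vanishes on the entire zero-section $\{\dot q = 0\}$, so the monotonicity claim cannot be read off from $\dot W \leq 0$ alone; one must argue that trajectories instantaneously leave this set wherever $\mathrm{grad}_\kappa V(q) \neq 0$, which is precisely what the cited result handles.
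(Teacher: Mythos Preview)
Your proposal is correct and follows essentially the same route as the paper: use the total energy $W$ to obtain $\dot W = -\nu(\dot q,\dot q)\le 0$, argue that $W$ is strictly decreasing on nonequilibrium trajectories (the paper does this directly by observing that $\nu(\dot q(t),\dot q(t))>0$ for almost all $t$ along such trajectories, rather than citing \cite{BulloAndLewis2004}), invoke Corollary~\ref{co:equilibria} for the chain recurrence claim, and then apply the global stable manifold theorem for almost global attractivity. Your explicit linear-algebraic verification of hyperbolicity is more detailed than the paper's (which simply asserts it), though note that the quadratic energy of the linearization is only a LaSalle function, not a strict Lyapunov function---the exponential stability of $0_{TQ}$ follows instead from hyperbolicity together with the Lyapunov stability furnished by $W$.
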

		
		\begin{proof}
			It is clear that the equilibrium set of \eqref{appendix:euler_lagrange} is precisely the image of the critical points of $V$ in the zero section of $TQ$, and moreover these equilibria can be verified to be hyperbolic since $\nu$ is a strict linear dissipation and the critical points of a Morse function are nondegenerate. Moreover, only $0_{TQ}$ is (locally exponentially) stable, while all other equilibria are unstable, since $0_Q$ is the unique minimum of $V$.
			Considering the total energy function given by
			\begin{equation}
				\label{total_energy_appendix}
				W : (q,\dot{q}) \mapsto V(q) + \tfrac{1}{2}\kappa(\dot{q},\dot{q}),
			\end{equation}
			we compute
			\begin{align}
				\dot{W} 
				&= dV(q) \dot{q}  + \kappa(	\overset{\kappa}{\nabla}_{\dot{q}} \dot{q},\dot{q}) \\
				&= dV(q) \dot{q}  + \kappa(	- \mathrm{grad}_\kappa\, V(q) - \kappa^\sharp \circ \nu^\flat (\dot{q})   \, , \, \dot{q} ) \\
				&= dV(q) \dot{q}  - dV(q) \dot{q}  - \nu(\dot{q},\dot{q}) 
				= -\nu(\dot{q},\dot{q}) \leq 0.
			\end{align}	
			For any trajectory $t\mapsto q(t)$ of the Euler-Lagrange dynamics, \eqref{appendix:euler_lagrange} and strictness of $\nu$ imply that $\nu(\dot{q}(t),\dot{q}(t))>0$ for almost all $t$ if and only if the trajectory is nonequilibrium, so $W$ is decreasing along nonequilibrium trajectories. Thus, by Corollary \ref{co:equilibria}, the chain recurrent set of \eqref{appendix:euler_lagrange} is exactly the set of equilibria. Becuase $W$ is proper (since $V$ is proper and $\nu$ is positive definite) and nonincreasing along trajectories, all forward trajectories are precompact and therefore converge to the chain recurrent set \cite{Mischaikow1995}. Since hyperbolic equilibria are isolated, all trajectories converge to some equilibrium. Application of the global stable manifold theorem shows that almost no trajectories converge to an unstable hyperbolic equilibrium, so the unique stable equilibrium $0_{TQ}$ is almost globally asymptotically stable.
		\end{proof}
		
		\begin{remark}
			A primary contribution of \cite{Koditschek1989} is the observation that the global limit behavior of a dissipative mechanical system is essentially determined by the global limit behavior of the associated gradient system, which is often called the ``lifting property'' of dissipative mechanical systems \cite{BulloAndLewis2004}. Here, we have shown that a similar lifting property holds for these systems in regards to the chain recurrent set.
		\end{remark}

\end{document}